\providecommand{\U}[1]{\protect\rule{.1in}{.1in}}
\providecommand{\U}[1]{\protect\rule{.1in}{.1in}}
\DeclareMathSymbol{\subsetneqq}{\mathbin}{AMSb}{36}
\theoremstyle{plain}
\numberwithin{equation}{section}
\newtheorem{theorem}{Theorem}[section]
\newtheorem{lemma}{Lemma}[section]
\newtheorem{definition}{Definition}[section]
\begin{document}
\title[The continuous version of the Moudafi's viscosity approximation method]{On the convergence of the continuous version of the Moudafi's viscosity approximation method}
\author{Ramzi May}%
\address{Mathematics Department, College of Science, King Faisal University, P.O. 380, Ahsaa 31982, Kingdom of Saudi Arabia}
\email{rmay@kfu.edu.sa}
\keywords{Hilbert space; Nonexpansive mappings; Viscosity approximation method; Asymptotic behavior}
\date{September 27, 2023}
\maketitle
\begin{abstract}
We study the asymptotic
behavior of the trajectories of the continuous dynamical system (CDS) associated to the
the discrete viscosity approximation method for fixed point problem of nonexpansive mapping (DDS)
which was introduced by Moudafi in 2000 [A. Moudafi, Viscosity approximation methods for
fixed points problems, J. Math. Anal. Appl. 241 (2000), 46-55]. We establish that
the trajectories $x(t)$ of the system (CDS) and the sequences $(x_{n})$ generated by the the discrete process (DDS) have a very similar asymptotic behaviors.
\end{abstract}
\section{Introduction}
Let $\mathcal{H}$ be a real Hilbert space with inner product $\langle.,.\rangle$ and associated  norm $\left\Vert . \right\Vert$. Throughout this paper,  $C$ is a nonempty closed and convex
subset of $\mathcal{H}$, $f:C\rightarrow C$ is a strictly contraction mapping with
coefficient $\alpha\in\lbrack0,1[$ i.e.,
 $$\left\Vert f(x)-f(y)\right\Vert
\leq\alpha\left\Vert x-y\right\Vert \forall x,y \in C,$$ and  $T:C\rightarrow
C$ is a nonexpansive mapping i.e.,
$$\left\Vert T(x)-T(y)\right\Vert
\leq\left\Vert x-y\right\Vert \forall x,y\in C.$$
We assume moreover that the set
$Fix(T)=\{x\in C:T(x)=x\}$ of fixed points of $T$ is nonempty. We recall that Kirk \cite{Kir} and Browder \cite{Bro}  independently proved in 1965 that if in addition the set $C$ is bounded in $\mathcal{H}$ then $T$ has at least one fixed point.
\par A wide variety of problems in convex optimization and variational analysis can be reduced to problems of fixed points of nonexpansives mapping (see for instance \cite{Byr},\cite{Byr},\cite{FMP},\cite{QX}  and references therein). Hence, It is therefore of interest to construct numerical efficient algorithms, continuous or discrete, that approximate such fixed points. Let us first notice that the classical and natural iterative process $x_{n+1}=T(x_{n})$  may not converge to a fixed point of $T$. To see this, it suffices to consider the simplest example when $T=-I$, where $I$ stands for the identity operator of $\mathcal{H}$. However, for any initial data $x_0\in C$, the solution $x(.)$ of the continuous dynamical
\begin{equation}
\left\{
\begin{array}
[c]{l}%
x^{\prime}(t)+x(t)=T(x(t)),t\geq0\\
x(0)=x_{0},
\end{array}
\right.  \label{t1}%
\end{equation}
associated to the discrete process $x_{n+1}=T(x_{n})$ converges weakly in $\mathcal{H}$ as $t\rightarrow+\infty$  to a fixed point of $T$ (see \cite{Bru}). We notice that the explicit Euler discretization with variable step size $h_n=\theta_n$ of the dynamical system (\ref{t1}) leads to the classical Krasnoselskii-Mann algorithm (see \cite{Man} and \cite{Kra}):
\begin{equation}
x_{n+1}=\theta_n x_n+(1-\theta_n)T(x_n).
\label{tt}
\end{equation}
It is Well-known (see for instance \cite{Gro}) that, for any initial data $x_1\in C$, the sequence $\{x_n\}$ generated by (\ref{tt}) converges weakly in $\mathcal{H}$ to a fixed point of $T$ provided that the sequence $\{\theta_n\}$ belongs to the interval $[0,1]$ and satisfies the condition
\begin{enumerate}
  \item [(C$_0$)] $\sum_{n=0}^{\infty}(1-\theta_n)\theta_{n}=\infty$.
\end{enumerate}
\par In a the pioneer paper \cite{Hal}, Halpern introduced an algorithm that converges strongly to a particular and well-defined fixed point of a nonexpansive mapping. Precisely, he considered the particular case when $C$ is the closed unit ball of $\mathcal{H}$ and established that, for every $x_1\in C$, the sequence defined recursively by the process $x_{n+1}=(1-\theta_n)T(x_n)$ converges strongly to the element of the set $Fix(T)$ with minimum norm if  $ \theta_n=\frac{1}{n^\theta}$ with $0<\theta<1$. Later in 1977, Lions \cite{Lio} generalized and improved  Halpern's convergence result. In fact, he proved that for every anchor point $u\in C$ and any initializing data $x_1\in C$, the sequence $\{x_n\}$ generated by the process $$ x_{n+1}=\theta_n u+(1-\theta_n)T(x_n)$$ converges strongly to the closest element $u^\star$ of the set $Fix(T)$ to $u$ provided the sequence $\{\theta_n\}$ belongs to $(0,1]$ and satisfies the conditions:
\begin{enumerate}
  \item [(C$_{1}$)] $\theta_{n}\rightarrow0$ as $n\rightarrow\infty,$
  \item [(C$_{2}$)] $\sum_{n=1}^{\infty}\theta_{n}=\infty,$
  \item [(C$_{3}$)] $\frac{\left\vert \theta_{n+1}-\theta_{n}\right\vert }%
{\theta_{n}^2}\rightarrow0$ as $n\rightarrow\infty,$
\end{enumerate}
In 2000, Moudafi \cite{Mou}
introduced the called viscosity approximation method. Precisely, he considered the iterative process%
\begin{equation}
x_{n+1}=\theta_{n}f(x_{n})+(1-\theta_{n})T(x_{n}),~n\geq1, \tag{DDS}%
\end{equation}
and proved that if $(\theta_{n})_{n}$ satisfies the conditions (C$_{1}$), (C$_{2}$) and

\begin{enumerate}
  \item [(C$_{4}$)] $\frac{\left\vert \theta_{n+1}-\theta_{n}\right\vert }{\theta_{n}\theta_{n+1}}\rightarrow0$ as $n\rightarrow\infty,$
\end{enumerate}
then, for any $x_{1}$ in $C$, the sequence
$\{x_n\}$ generated by (DDS) converges strongly in $\mathcal{H}$ to the
unique solution $q^{\ast}$ of the variational problem%
\begin{equation}
\left\{
\begin{array}
[c]{l}%
q^{\ast}\in Fix(T)\\
\langle f(q^{\ast})-q^{\ast},z-q^{\ast}\rangle\leq0,\forall z\in Fix(T).
\end{array}
\right.  \tag{VP}%
\end{equation}
Later in 2004, Xu \cite{Xu} improved Moudafi's convergence result by replacing the
condition (C$_{4}$) by the weaker one:

\begin{enumerate}
  \item [(C$_{5}$)] $\frac{\left\vert \theta_{n+1}-\theta_{n}\right\vert }%
{\theta_{n}}\rightarrow0$ as $n\rightarrow\infty$ or $\sum_{n=1}^{\infty
}\left\vert \theta_{n+1}-\theta_{n}\right\vert <\infty.$
\end{enumerate}
In our present work, we consider the continuous dynamical system%
\begin{equation}
\left\{
\begin{array}
[c]{l}%
x^{\prime}(t)+x(t)=\theta(t)f(x(t))+(1-\theta(t))T(x(t)),t\geq0\\
x(0)=x_{0},
\end{array}
\right.  \tag{CDS}%
\end{equation}
associated to the discrete algorithm (DDS) where $\theta:[0,\infty)\rightarrow (0,1]$ is a regular function. We prove that, if the
function $\theta(.)$ satisfies the continuous version of the discreet conditions
(C$_{1}$), (C$_{2}$) and (C$_{5}$), then for any initial data $x_{0}\in C$, the
system (CDS) has a unique global solution $x\in C^{1}([0,\infty),\mathcal{H})$ which
converges strongly in $\mathcal{H}$ as $t\rightarrow+\infty$ to the unique
solution $q^{\ast}$ of the problem (VP). Moreover, in the particular case
where $\theta(t)=\frac{K}{(1+t)^{\nu}}$ with $K>0$ and $\nu\in(0,1],$ we
establish an estimate on the rate of convergence of $\left\Vert
T(x(t))-x(t)\right\Vert $ as $t\rightarrow+\infty.$ Such result can be
considered as the continuous version of a recently result, established by
Lieder \cite{lie} , on the rate of convergence of the sequence $\{x_{n}%
-T(x_{n})\}$ for the process (DDS) in the case when the function $f$ is
constant and the sequence $\{\theta_{n}\}$ is given by $\theta_{n}=\frac{1}{n+2}.$
\par The rest of the paper is organized as follows. In the next section, we recall some classical notions and results from functional analysis and convex analysis that are useful in the sequel of the paper. In the third section, we study the strong convergence of the trajectory of the system (CDS) under the continuous version of the discrete conditions (C$_{1}$), (C$_{2}$) and (C$_{5}$). In the last section, we investigate the stability of the system (CDS) under relatively small perturbations and we establish an estimation on the rate of the convergence of the residual term $x(t)-T(x(t))$ in the particular case when $\theta(t)=\frac{K}{(1+t)^{\nu}}$ with $K>0$ and $0<\nu\leq1$.
\section{Preliminaries}
In this section, we recall some classical definitions and results from convex
and functional analysis and derive some simple lemmas that will be needed in proving the
main results of this paper.

We first recall the definition and the main properties of the metric projection
onto a nonempty, closed and convex subset of the Hilbert space $\mathcal{H}$.

\begin{lemma}[{\protect\cite[Proposition 1.37]{Pey}}]
\label{pro}Let $K$ be a nonempty, closed and convex subset of $\mathcal{H}$. Then the following assertions hold:
\begin{enumerate}
\item[(1)] For every $%
x\in \mathcal{H},$ there exists a unique $P_{K}(x)\in K$ such that
\begin{equation*}
\left\Vert x-P_{K}(x)\right\Vert \leq \left\Vert x-y\right\Vert \ ~\forall
y\in K.
\end{equation*}%
The operator $P_{K}:\mathcal{H}\rightarrow K$ is called the metric
projection onto $K$.
\item[(2)] For every $x\in \mathcal{H},$ $P_{K}(x)$ is the unique element of
$K$ satisfying%
\begin{equation}
\langle P_{K}(x)-x,P_{K}(x)-y\rangle\leq0,\text{ for every }y\in K.
\end{equation}
\item[(3)] The operator $P_{K}:\mathcal{H}\rightarrow K$ is
nonexpansive i.e.,
\begin{equation}
\left\Vert P_{K}(x)-P_{K}(y)\right\Vert \leq\left\Vert x-y\right\Vert ,\text{
for all }x,y\in\mathcal{H}.
\end{equation}
\end{enumerate}
\end{lemma}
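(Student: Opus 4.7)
The plan is to prove the three assertions in order. Assertion (1) will follow from a minimizing-sequence argument based on the parallelogram identity; assertion (2) from a first-order convexity argument; and assertion (3) will be a quick consequence of (2) combined with the Cauchy--Schwarz inequality.

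For assertion (1), I would fix $x \in \mathcal{H}$, set $d := \inf_{y \in K}\|x-y\|$, pick a minimizing sequence $(y_n) \subset K$ with $\|x-y_n\| \to d$, and apply the parallelogram law to the pair $x - y_n$ and $x - y_m$:
\begin{equation*}
\|y_n - y_m\|^2 = 2\|x-y_n\|^2 + 2\|x-y_m\|^2 - 4\bigl\|x - \tfrac{y_n+y_m}{2}\bigr\|^2.
\end{equation*}
The convexity of $K$ places the midpoint $(y_n+y_m)/2$ in $K$, so the last term is bounded above by $-4d^2$, and the right-hand side tends to $2d^2 + 2d^2 - 4d^2 = 0$. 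Hence $(y_n)$ is Cauchy, and the completeness of $\mathcal{H}$ together with the closedness of $K$ yields a limit $P_K(x) \in K$ satisfying $\|x - P_K(x)\| = d$. Uniqueness follows from the same identity applied to any two minimizers.

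For assertion (2), the forward direction uses that, for every $y \in K$ and every $t \in (0,1]$, the point $P_K(x) + t(y - P_K(x))$ lies in $K$ by convexity; expanding the inequality $\|x - P_K(x) - t(y - P_K(x))\|^2 \geq \|x - P_K(x)\|^2$, dividing by $2t$, and letting $t \downarrow 0$ produces $\langle P_K(x) - x, P_K(x) - y\rangle \leq 0$. The converse direction expands $\|x-y\|^2 = \|x-p\|^2 + 2\langle x-p, p-y\rangle + \|p-y\|^2$ and uses the hypothesis to conclude $\|x-y\| \geq \|x-p\|$ for every $y \in K$, so $p = P_K(x)$ by the uniqueness part of (1).

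Assertion (3) is immediate from (2): apply the variational inequality at $x$ with test point $P_K(y)$, apply it at $y$ with test point $P_K(x)$, and add the two, obtaining
\begin{equation*}
\|P_K(x) - P_K(y)\|^2 \leq \langle x - y, P_K(x) - P_K(y)\rangle,
\end{equation*}
after which Cauchy--Schwarz finishes. I do not anticipate any substantial obstacle, as the whole lemma is a classical textbook result; the only delicate moment is ensuring that the convexity of $K$ is invoked precisely to place $(y_n+y_m)/2$ in $K$ in the argument for (1), since this is what unlocks the parallelogram identity and, with it, both existence and uniqueness.
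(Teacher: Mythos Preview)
Your argument is correct and is the standard textbook proof of the projection theorem in Hilbert space. However, note that the paper does not actually prove this lemma: it is stated with a citation to \cite[Proposition 1.37]{Pey} and used as a black box, so there is no ``paper's own proof'' to compare against. Your write-up would serve perfectly well as a self-contained proof should one be desired.
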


The second result is a classical property of the set of fixed points of a
nonexpansive mapping

\begin{lemma}[{\protect\cite[Proposition 4.13]{BC}}] \label{fix} Let $K$ be a closed convex and nonempty
subset of $\mathcal{H}$. If $A:K\rightarrow K$ is a nonexpansive mapping then
$F_{ix}(A)=\{x\in K:A(x)=x\}$ is a closed and convex subset of $\mathcal{H}$.
\end{lemma}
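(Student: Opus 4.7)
My plan is to treat the two claims (closedness and convexity) separately, since they rely on essentially different facts: closedness uses only continuity of $A$, while convexity is where the Hilbert space structure enters.

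For closedness, I would argue that $\text{Fix}(A)$ is the zero set of the continuous map $x\mapsto x-A(x)$ on the closed set $K$. Concretely, if $(x_{n})\subset\text{Fix}(A)$ converges to some $x\in\mathcal{H}$, then $x\in K$ because $K$ is closed, and the nonexpansiveness of $A$ gives $\|A(x)-x_n\|=\|A(x)-A(x_n)\|\leq\|x-x_n\|\to 0$, so $A(x_n)\to A(x)$; since also $A(x_n)=x_n\to x$, uniqueness of the limit yields $A(x)=x$.

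For convexity, I would take $x,y\in\text{Fix}(A)$ and $\lambda\in[0,1]$, set $z=\lambda x+(1-\lambda)y\in K$, and aim to show $A(z)=z$. The key ingredient is the identity, valid in any Hilbert space,
\begin{equation*}
\lambda\|A(z)-x\|^{2}+(1-\lambda)\|A(z)-y\|^{2}=\|A(z)-z\|^{2}+\lambda(1-\lambda)\|x-y\|^{2},
\end{equation*}
which follows from expanding inner products and using the standard identity $\lambda\|x\|^{2}+(1-\lambda)\|y\|^{2}-\|z\|^{2}=\lambda(1-\lambda)\|x-y\|^{2}$. Combining this with the nonexpansive bounds $\|A(z)-x\|=\|A(z)-A(x)\|\leq\|z-x\|=(1-\lambda)\|x-y\|$ and $\|A(z)-y\|\leq\lambda\|x-y\|$, the left-hand side is at most $\lambda(1-\lambda)\|x-y\|^{2}$, forcing $\|A(z)-z\|^{2}\leq0$ and hence $A(z)=z$.

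I do not expect any serious obstacle: closedness is a routine sequential argument once one notes that nonexpansive maps are $1$-Lipschitz, and the convexity step is standard provided one recalls the above Hilbert-space identity, which is really just a restatement of the parallelogram law for convex combinations. The only subtle point is that the argument genuinely uses the inner-product structure; in a general Banach space one would instead need strict convexity of the norm to run a comparable argument, but here Lemma \ref{pro} has already set us firmly in the Hilbert framework.
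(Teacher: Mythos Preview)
Your argument is correct: the sequential proof of closedness and the Hilbert-space identity argument for convexity are both standard and fully justified. Note, however, that the paper does not supply its own proof of this lemma---it merely quotes the result from \cite[Proposition 4.13]{BC}---so there is nothing in the paper to compare your approach against; your write-up simply fills in what the authors left to the reference.
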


The next result is a particular case of the general demi-closedness
property for nonexpansive mappings
\begin{lemma}[{\protect\cite[Corollary 4.18]{BC}}]
\label{clo}Let $K$ be a closed convex and nonempty subset of $\mathcal{H}$,
 $A:K\rightarrow K$ a nonexpansive mapping, and $\{x_n\}$  a sequence
in $K$. If $\{x_n\}$ converges weakly in $\mathcal{H}$ to some element $\bar{x}$ and $\{x_{n}-A(x_{n})\}$
converges strongly to $0$ in $\mathcal{H}$, then $x\in F_{ix}(A)$.
\end{lemma}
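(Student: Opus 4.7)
The plan is to establish this demi-closedness property by a standard Opial-type argument, working in the Hilbert space structure. First I would observe that since $K$ is convex and closed, it is weakly closed by Mazur's theorem; thus $\bar{x}\in K$ and $A(\bar{x})$ is well defined, so the only thing to show is $A(\bar{x})=\bar{x}$.

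The main tool is Opial's inequality in a Hilbert space: if $x_n\rightharpoonup\bar{x}$ and $y\in\mathcal{H}$ with $y\neq\bar{x}$, then from the identity
\begin{equation*}
\|x_n-y\|^2=\|x_n-\bar{x}\|^2+2\langle x_n-\bar{x},\bar{x}-y\rangle+\|\bar{x}-y\|^2,
\end{equation*}
the weak convergence $x_n\rightharpoonup\bar{x}$ makes the cross term vanish in the limit, so
\begin{equation*}
\liminf_{n\to\infty}\|x_n-y\|^2=\liminf_{n\to\infty}\|x_n-\bar{x}\|^2+\|\bar{x}-y\|^2>\liminf_{n\to\infty}\|x_n-\bar{x}\|^2.
\end{equation*}
This strict inequality is the essential consequence of the Hilbert structure that drives the argument.

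Next, I would argue by contradiction: suppose $A(\bar{x})\neq\bar{x}$ and apply Opial's inequality with $y=A(\bar{x})$. The triangle inequality together with the nonexpansiveness of $A$ gives
\begin{equation*}
\|x_n-A(\bar{x})\|\leq\|x_n-A(x_n)\|+\|A(x_n)-A(\bar{x})\|\leq\|x_n-A(x_n)\|+\|x_n-\bar{x}\|.
\end{equation*}
Passing to the $\liminf$ and using the assumption $\|x_n-A(x_n)\|\to 0$, we obtain $\liminf_n\|x_n-A(\bar{x})\|\leq\liminf_n\|x_n-\bar{x}\|$, which contradicts the strict inequality above. Hence $A(\bar{x})=\bar{x}$, i.e., $\bar{x}\in F_{ix}(A)$.

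The proof is essentially routine once Opial's lemma is in hand; there is no real obstacle, but the slightly delicate point is the appeal to weak closedness of $K$ to guarantee that $\bar{x}\in K$ so that $A(\bar{x})$ makes sense. An alternative, even more direct, route would be to expand $\|x_n-A(\bar{x})\|^2-\|x_n-\bar{x}\|^2$ directly, use nonexpansiveness to replace $\|A(x_n)-A(\bar{x})\|^2$ by $\|x_n-\bar{x}\|^2$ up to lower-order terms controlled by $\|x_n-A(x_n)\|$, and then pass to the weak limit to force $\|A(\bar{x})-\bar{x}\|^2\leq 0$; this avoids invoking Opial's lemma as a named result but carries out the same computation.
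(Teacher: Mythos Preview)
Your argument is correct and is in fact the standard Opial-based proof of the demi-closedness principle. Note, however, that the paper does not supply its own proof of this lemma: it is simply quoted from \cite[Corollary 4.18]{BC}, so there is no in-paper argument to compare against. What you have written is essentially the textbook proof (and indeed the one underlying the cited reference), including the necessary observation that $K$, being closed and convex, is weakly closed so that $\bar{x}\in K$ and $A(\bar{x})$ is defined.
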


We will now prove a variant of Gronwall's inequality that will be used frequently in the sequel.

\begin{lemma}
\label{gro}Let $u,v,w:[0,\infty)\rightarrow[0,\infty)$ be
three continuous functions. If the function $u$ is
absolutely continuous and satisfies, for almost every $t\geq0,$ the
differential inequality
\[
u^{\prime}(t)+2v(t)u(t)\leq2w(t)\sqrt{u(t)}.
\]
Then, for very $t\geq0,$
\begin{equation}
\sqrt{u(t)}\leq e^{-V(t)}\sqrt{u(0)}+e^{-V(t)}\int_{0}^{t}e^{V(s)}w(s)ds,
\label{g1}%
\end{equation}
where $V(t)=\int_{0}^{t}v(\tau)d\tau.$
\end{lemma}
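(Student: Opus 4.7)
The natural idea is to reduce this to a linear Gronwall inequality for $\varphi(t):=\sqrt{u(t)}$. Formally, dividing the hypothesis by $2\sqrt{u(t)}$ gives $\varphi'(t)+v(t)\varphi(t)\leq w(t)$, so multiplying by the integrating factor $e^{V(t)}$ and integrating on $[0,t]$ yields the announced estimate. The one serious obstacle is that $\sqrt{u}$ need not be absolutely continuous, let alone differentiable, at points where $u$ vanishes, so one cannot differentiate $\sqrt{u}$ directly from the absolute continuity of $u$.

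To get around this, I would regularize by setting $u_\varepsilon(t):=u(t)+\varepsilon$ for a parameter $\varepsilon>0$. Then $u_\varepsilon\geq\varepsilon>0$, so $\sqrt{u_\varepsilon}$ is absolutely continuous with
\[
(\sqrt{u_\varepsilon})'(t)=\frac{u'(t)}{2\sqrt{u_\varepsilon(t)}}\quad\text{a.e.}
\]
Using the hypothesis and the pointwise bounds $\sqrt{u}\leq\sqrt{u_\varepsilon}$ and $u=u_\varepsilon-\varepsilon$, a direct computation gives, almost everywhere,
\[
(\sqrt{u_\varepsilon})'(t)+v(t)\sqrt{u_\varepsilon(t)}\ \leq\ w(t)+v(t)\,\frac{\varepsilon}{\sqrt{u_\varepsilon(t)}}\ \leq\ w(t)+v(t)\sqrt{\varepsilon},
\]
the last inequality since $\varepsilon/\sqrt{u_\varepsilon}\leq\varepsilon/\sqrt{\varepsilon}=\sqrt{\varepsilon}$.

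Now the regularized inequality is linear in $\sqrt{u_\varepsilon}$, so multiplying by $e^{V(t)}$ turns it into
\[
\bigl(e^{V(t)}\sqrt{u_\varepsilon(t)}\bigr)'\ \leq\ e^{V(t)}w(t)+\sqrt{\varepsilon}\,v(t)e^{V(t)}.
\]
Integrating on $[0,t]$ and using $\int_0^t v(s)e^{V(s)}\,ds=e^{V(t)}-1\leq e^{V(t)}$, I would obtain
\[
\sqrt{u_\varepsilon(t)}\ \leq\ e^{-V(t)}\sqrt{u_\varepsilon(0)}+e^{-V(t)}\int_0^t e^{V(s)}w(s)\,ds+\sqrt{\varepsilon}.
\]
Finally, letting $\varepsilon\to 0^+$ and using continuity of $u$ at $0$ (so $\sqrt{u_\varepsilon(0)}\to\sqrt{u(0)}$ and $\sqrt{u_\varepsilon(t)}\to\sqrt{u(t)}$) yields exactly the desired inequality \eqref{g1}. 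The whole argument is short; the only delicate step is the regularization bound, which requires noticing that the loss term $v\varepsilon/\sqrt{u_\varepsilon}$ can be controlled uniformly by $v\sqrt{\varepsilon}$ so that it vanishes in the limit.
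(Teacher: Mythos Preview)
Your proof is correct and essentially identical to the paper's: both regularize by replacing $u$ with $u+\varepsilon$, differentiate $\sqrt{u+\varepsilon}$, control the error term $v\,\varepsilon/\sqrt{u+\varepsilon}$ by $v\sqrt{\varepsilon}$, apply the integrating factor $e^{V}$, integrate, and let $\varepsilon\to 0$. The only cosmetic difference is notation (the paper writes $u_\varepsilon=\sqrt{u+\varepsilon}$ directly) and that the paper keeps the slightly sharper remainder $\sqrt{\varepsilon}(1-e^{-V(t)})$ instead of your $\sqrt{\varepsilon}$, which is immaterial in the limit.
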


\begin{proof}
Let $\varepsilon>0.$ It is clear that the function $u_{\varepsilon}$ defined on $[0,\infty)$  by $u_{\varepsilon}(t)=\sqrt{u(t)+\varepsilon}$  is absolutely
continuous and satisfies the estimations
\begin{align*}
u_{\varepsilon}^{\prime}(t)  &  =\frac{u^{\prime}(t)}{2\sqrt{u(t)+\varepsilon
}}\\
&  \leq-v(t)\frac{u(t)}{\sqrt{u(t)+\varepsilon}}+w(t)\frac{\sqrt{u(t)}}%
{\sqrt{u(t)+\varepsilon}}\\
&  \leq-v(t)u_{\varepsilon}(t)+v(t)\frac{\varepsilon}{\sqrt{u(t)+\varepsilon}%
}+w(t)\\
&  \leq-v(t)u_{\varepsilon}(t)+w(t)+v(t)\sqrt{\varepsilon}.
\end{align*}
Therefore, for almost every $t\geq0,$
\[
\left(  e^{V(t)}u_{\varepsilon}(t)\right)  ^{\prime}\leq e^{V(t)}%
w(t)+\sqrt{\varepsilon}\left(  e^{V(t)}\right)  ^{\prime}.
\]
Integrating the later differential inequality, we obtain%
\[
\sqrt{u(t)+\varepsilon}\leq e^{-V(t)}\sqrt{u(0)+\varepsilon}+e^{-V(t)}\int
_{0}^{t}e^{V(s)}w(s)ds+\sqrt{\varepsilon}(1-e^{-V(t)}),~\forall t\geq0.
\]
Hence, by letting $\varepsilon\rightarrow0,$ we get the desired inequality
(\ref{g1}).
\end{proof}
We close this section by proving the following simple result that will be useful in the proof of the existence of the solutions $x(.)$ of the differential system (CDS).
\begin{lemma}
\label{int} Let $K$ be a nonempty closed and convex subset of the Hilbert space $%
\mathcal{H}$. Let $a<b$ be two reals numbers, $w:[a,b]\rightarrow \lbrack 0,\infty
)$ a continuous function such that $\int_{a}^{b}w(t)dt>0$, and $%
v:[a,b]\rightarrow \mathcal{H}$ a continuous functions such that $v(t)\in K$
for every $t\in [a,b].$ Then
\[
I:=\frac{1}{\int_{a}^{b}w(t)dt}\int_{a}^{b}v(s)w(s)ds\in K.
\]
\end{lemma}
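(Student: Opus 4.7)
The plan is to express $I$ as a limit of convex combinations of points in $K$ and then invoke the closedness of $K$. Concretely, I would approximate both the numerator $N:=\int_a^b v(s)w(s)\,ds$ and the denominator $W:=\int_a^b w(s)\,ds>0$ by Riemann sums associated with a common tagged partition $a=t_0<t_1<\cdots<t_n=b$ of mesh $\delta_n\to 0$, with tags $s_i\in[t_{i-1},t_i]$.

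The key observation is the following. Write
\[
N_n=\sum_{i=1}^{n}v(s_i)\,w(s_i)(t_i-t_{i-1}),\qquad W_n=\sum_{i=1}^{n}w(s_i)(t_i-t_{i-1}).
\]
Since $v$ and $w$ are continuous on the compact interval $[a,b]$, the products $vw$ and $w$ are Riemann integrable, so $N_n\to N$ in $\mathcal{H}$ and $W_n\to W$ in $\mathbb{R}$. Because $W>0$, we have $W_n>0$ for all $n$ sufficiently large, and for such $n$
\[
\frac{N_n}{W_n}=\sum_{i=1}^{n}\lambda_i^{(n)}\,v(s_i),\qquad\lambda_i^{(n)}=\frac{w(s_i)(t_i-t_{i-1})}{W_n}.
\]
The coefficients $\lambda_i^{(n)}$ are nonnegative (since $w\geq 0$) and sum to $1$, so $N_n/W_n$ is a finite convex combination of elements $v(s_i)\in K$. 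By convexity of $K$, each $N_n/W_n$ lies in $K$.

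Passing to the limit, $N_n/W_n\to N/W=I$, and since $K$ is closed the limit $I$ belongs to $K$. I do not expect any genuine obstacle: the only small subtlety is to make sure the denominators $W_n$ are positive from some index on, which is immediate from $W>0$ and the convergence $W_n\to W$.

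(Alternatively, one could argue by separation: if $I\notin K$, Hahn--Banach yields $z\in\mathcal{H}$ and $\alpha\in\mathbb{R}$ with $\langle z,k\rangle\leq\alpha$ for every $k\in K$ and $\langle z,I\rangle>\alpha$; integrating the pointwise inequality $\langle z,v(s)\rangle w(s)\leq\alpha w(s)$ yields $\langle z,I\rangle\leq\alpha$, a contradiction. But the Riemann-sum proof is more elementary and is the one I would write out.)
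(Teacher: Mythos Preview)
Your proof is correct and follows essentially the same route as the paper: approximate $I$ by finite convex combinations of values $v(s_i)\in K$ built from Riemann-type sums, and then use the closedness of $K$ to pass to the limit. The only cosmetic difference is that the paper takes the weights to be $\alpha_{k,N}=\dfrac{1}{\int_a^b w}\int_{x_{k,N}}^{x_{k+1,N}}w(s)\,ds$ (so they are automatically nonnegative and sum to $1$, avoiding the small step of ensuring $W_n>0$), but the argument is otherwise the same.
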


\begin{proof}
For every $N\in \mathbb{N},$ set $I_{N}=\sum_{k=0}^{N-1}v(x_{k,N})\alpha
_{k,N}$ with $x_{k,N}=a+k\frac{b-a}{N}$ and $\alpha _{k,N}=\frac{1}{%
\int_{a}^{b}w(t)dt}\int_{x_{K,N}}^{x_{k+1,N}}w(s)ds.$ From the convexity of the set $%
K,$ it follows that $I_{N}\in K$ for every $N\in \mathbb{N}$. On the other
hand, the uniform continuity of the function $v$ implies directly that the sequence $\{I_{N}\}$ converges strongly
in $\mathcal{H}$ to $I.$ Hence, by using the fact that $K$ is a closed
subset of $\mathcal{H}$, we conclude that $I\in K.$
\end{proof}
\section{Strong convergence of the trajectories of the dynamical system(CDS)}

In this section, we study the asymptotic behavior of the solution to the dynamical system%
\begin{equation}
\left\{
\begin{array}
[c]{l}%
x^{\prime}(t)+x(t)=\theta(t)f(x(t))+(1-\theta(t))T(x(t)),t\geq0\\
x(0)=x_{0},
\end{array}
\right.  \tag{CDS}%
\end{equation}
where $\theta:[0,\infty)\rightarrow (0,1]$ is an absolutely continuous function and $x_0\in C$ is a given initial data.

Before stating our main result, let us first precise the notion of a trajectory of the system (CDS).

\begin{definition} \label{def}
A trajectory of the system (CDS) is a continuously differentiable function
$x:[0,\infty\lbrack\rightarrow \mathcal{H}$ that satisfies the following properties:
\begin{enumerate}
\item[(1)] $x(t)\in C$ for every $t\geq0,$

\item[(2)] $x(0)=x_{0},$

\item[(3)] $x^{\prime}(t)+x(t)=\theta(t)f(x(t))+(1-\theta(t))T(x(t))$ for
every $t\geq0.$
\end{enumerate}
\end{definition}
We now state and prove the main result of the paper.
\begin{theorem}
\label{the1}The system (CDS) has a unique trajectory $x(.).$ Moreover, if in addition
the function $\theta(.)$ satisfies the following conditions

\begin{enumerate}
\item[(C'$_{1}$)] $\theta(t)\rightarrow0$ as $t\rightarrow\infty,$

\item[(C'$_{2}$)] $\int_{0}^{+\infty}\theta(t)dt=\infty,$

\item[(C'$_{5}$)] $\int_{0}^{+\infty}\left\vert \theta^{\prime}(t)\right\vert
dt<\infty$ or $\frac{\theta^{\prime}(t)}{\theta(t)}\rightarrow0$ as
$t\rightarrow\infty,$
\end{enumerate}
then $x(t)$ converges strongly in $\mathcal{H}$ as $t\rightarrow\infty$ to $q^{\ast}$
the unique solution of the variational problem%
\begin{equation}
\left\{
\begin{array}
[c]{l}%
q^{\ast}\in Fix(T)\\
\langle f(q^{\ast})-q^{\ast},z-q^{\ast}\rangle\leq0,\forall z\in Fix(T).
\end{array}
\right.  \tag{VP}%
\end{equation}
\end{theorem}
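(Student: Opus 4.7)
The plan is to proceed in four steps: (i) existence and uniqueness of the trajectory $x(\cdot)$ and of $q^*$; (ii) boundedness of $x(\cdot)$; (iii) asymptotic regularity $\|x'(t)\|\to 0$; and (iv) strong convergence to $q^*$. For (i) I extend $f$ and $T$ from $C$ to $\mathcal{H}$ via $\tilde f=f\circ P_C$ and $\tilde T=T\circ P_C$, which remain $\alpha$-contractive and nonexpansive by Lemma \ref{pro}(3). Cauchy--Lipschitz then delivers a unique global $C^{1}$ solution of the extended ODE. To see that $x(t)\in C$ I invoke the Duhamel formula
\[
x(t)=e^{-t}x_0+\int_0^t e^{s-t}\bigl[\theta(s)\tilde f(x(s))+(1-\theta(s))\tilde T(x(s))\bigr]ds;
\]
the integrand takes values in $C$, so Lemma \ref{int} places $(1-e^{-t})^{-1}$ times the integral in $C$, and $x(t)$ appears as a convex combination of $x_0\in C$ and this point. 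Uniqueness of $q^*$ follows from Lemma \ref{fix} and the Banach fixed point theorem applied to the $\alpha$-contraction $P_{Fix(T)}\circ f$.

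For (ii), fix $p\in Fix(T)$ and set $\phi_p(t)=\tfrac12\|x(t)-p\|^2$. The $\alpha$-contractivity of $f$, the nonexpansivity of $T$, and $T(p)=p$ give, after a direct computation,
\[
\phi_p'(t)+2(1-\alpha)\theta(t)\phi_p(t)\le\sqrt{2}\,\theta(t)\|f(p)-p\|\sqrt{\phi_p(t)}.
\]
Lemma \ref{gro} combined with $\int_0^t e^{V(s)}\theta(s)\,ds=(1-\alpha)^{-1}(e^{V(t)}-1)$, where $V(t)=(1-\alpha)\int_0^t\theta$, yields the a priori bound $\|x(t)-p\|\le\|x_0-p\|+(1-\alpha)^{-1}\|f(p)-p\|$. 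In particular the trajectory is bounded and $M:=\sup_{t\ge 0}\|f(x(t))-T(x(t))\|<\infty$.

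Step (iii) is the main obstacle. Since $x(t)-T(x(t))=-x'(t)+\theta(t)\bigl(f(x(t))-T(x(t))\bigr)$, condition (C$'_{1}$) reduces the problem to proving $\|x'(t)\|\to 0$. To this end I apply the same Lyapunov scheme to $\psi_s(t)=\tfrac12\|x(t+s)-x(t)\|^2$: subtracting the ODE at $t$ and at $t+s$ and arguing as in (ii) produces
\[
\psi_s'(t)+2(1-\alpha)\theta(t+s)\psi_s(t)\le\sqrt{2}\,M\,|\theta(t+s)-\theta(t)|\,\sqrt{\psi_s(t)}.
\]
Lemma \ref{gro}, division by $s$, and $s\to 0^+$ (justified by the absolute continuity of $\theta$) then give
\[
\|x'(t)\|\le e^{-V^{*}(t)}\|x'(0)\|+M\,e^{-V^{*}(t)}\int_0^t e^{V^{*}(\tau)}|\theta'(\tau)|\,d\tau,
\]
where $V^{*}(t)=(1-\alpha)\int_0^t\theta$. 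Condition (C$'_{2}$) kills the first term; the second vanishes under either alternative of (C$'_{5}$) by an $\varepsilon$-splitting at some large $T$, using the exponential identity above in the ratio-smallness case to absorb the factor $e^{V^{*}(t)}$. The delicate point is that $f$ and $T$ are only Lipschitz, so one cannot differentiate the ODE directly in $t$; the difference-quotient detour through Lemma \ref{gro} is the cleanest way to transfer the discrete condition of \cite{Xu} into the continuous setting.

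For (iv) set $\phi(t)=\tfrac12\|x(t)-q^*\|^2$. Splitting $\langle x-q^*,f(x)-q^*\rangle=\langle x-q^*,f(x)-f(q^*)\rangle+\langle x-q^*,f(q^*)-q^*\rangle$ and arguing as in (ii) yields
\[
\phi'(t)+2(1-\alpha)\theta(t)\phi(t)\le\theta(t)\,\langle x(t)-q^*,\,f(q^*)-q^*\rangle.
\]
The boundedness of $x(\cdot)$ together with step (iii) ensure, via Lemma \ref{clo}, that every weak cluster point of $x(t)$ as $t\to\infty$ lies in $Fix(T)$, so (VP) forces $\limsup_{t\to\infty}\langle x(t)-q^*,f(q^*)-q^*\rangle\le 0$. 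A now-classical Gronwall-type argument (split the forcing integral at a time past which the inner product is at most $\varepsilon$, then absorb the exponential as in (iii)) finally produces $\phi(t)\to 0$, which is the claimed strong convergence.
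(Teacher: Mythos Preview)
Your proof is correct and follows essentially the same route as the paper's: extension of $f,T$ to $\mathcal{H}$ via $P_C$ and Cauchy--Lipschitz together with the Duhamel representation and Lemma~\ref{int} for step~(i); the Lyapunov function $\|x(t)-p\|^2$ with Lemma~\ref{gro} for boundedness; the finite-difference trick $\|x(t+s)-x(t)\|^2$ plus Lemma~\ref{gro} and the limit $s\to 0$ to obtain $\|x'(t)\|\to 0$ under (C$'_5$); and finally demiclosedness (Lemma~\ref{clo}) to identify weak cluster points in $Fix(T)$, followed by a Gronwall argument on $\|x(t)-q^*\|^2$. The only cosmetic variations are your use of a generic $p\in Fix(T)$ instead of $q^*$ in step~(ii) and of $\theta(t+s)$ rather than $\theta(t)$ as the damping coefficient in step~(iii), neither of which alters the argument.
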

\begin{proof}
Let us first recall, for the convenience of the readers, the classical proof of the
existence and uniqueness of the solution $q^{\ast}$ of the problem (VP). First,  from
Lemma \ref{fix}, $Fix(T)$ is a closed and convex nonempty subset of $\mathcal{H}$, then the projection operator $P_{Fix(T)}$ is well-defined. Moreover, from
the second assertion of Lemma \ref{pro}, the problem (VP) is equivalent to
$q^{\ast}$ is a fixed point of the mapping $P_{Fix(T)}\circ
f:Fix(T)\rightarrow Fix(T).$ Now, since $P_{Fix(T)}$ is nonexpansive, the
mapping $P_{Fix(T)}\circ f$ is a strict contraction with coefficient $\alpha$
and therefore, according to the classical theorem of Banach,  it has a unique fixed
point. This proves the existence and the uniqueness of the solution $q^{\ast}$
of (VP).
\par We divide the second part of the proof into many steps.
\par\noindent \textbf{The first step:} We prove here the existence and the uniqueness of the
trajectory $x(t)$ of the system (CDS). To do this, we consider the Cauchy problem%
\begin{equation}
\left\{
\begin{array}
[c]{l}%
x^{\prime}(t)+x(t)=g(t,x(t)),t\geq0\\
x(0)=x_{0},
\end{array}
\right.  \label{cau}%
\end{equation}
where $g:[0,\infty)\times \mathcal{H}\rightarrow \mathcal{H}$ is the mapping defined by
$$g(t,x)=\theta(t)f(P_{C}(x))+(1-\theta(t))T(P_{C}(x)).$$
It is clear that the
function $g$ is continuous and satisfies , for every $t\geq0$ and
$x_{1},x_{2}\in \mathcal{H},$ the estimations%
\begin{align}
\left\Vert g(t,x_{1})-g(t,x_{2})\right\Vert  &  \leq(1-\gamma\theta
(t))\left\Vert x_{1}-x_{2}\right\Vert \label{est}\\
&  \leq\left\Vert x_{1}-x_{2}\right\Vert ,\nonumber
\end{align}
with $$\gamma=1-\alpha.$$ Hence, according to the classical theorem of Cauchy-Lipschitz,
system (\ref{cau}) has a unique global solution $x\in C^{1}%
([0,\infty),\mathcal{H}).$ Let $t>0$ be a fixed real. From (\ref{cau}),
\begin{align*}
x(t)  &  =e^{-t}x_{0}+e^{-t}\int_{0}^{t}e^{s}g(s,x(s))ds\\
&  =e^{-t}x_{0}+(1-e^{-t})\int_{0}^{t}g(s,x(s))w_t(s)d(s)
\end{align*}
where $w_t(s)=1_{[0,t]}(s)\frac{e^{s}}{1-e^{-t}}ds$. Since, for every $s\in[
0,t],g(s,x(s))\in C$, Lemma \ref{int} ensures that $\int_{0}^{t}g(s,x(s))w_t(s)d(s)\in C$, which implies, thanks again to the convexity of $C,$ that $x(t)\in C.$
This proves that $x(.)$ is a trajectory of the system (CDS) in the sense of the
definition \ref{def}. The uniqueness of the the trajectory of (CDS) follows
from the uniqueness of the solution of the Cauchy problem (\ref{cau}) and the
trivial fact that every trajectory of (CDS) is also a solution to (\ref{cau}).
\par\noindent \textbf{The second step:} In this step, we will prove that the trajectory $x(.)$ is bounded i.e.,
$x\in L^{\infty}([0,\infty),\mathcal{H}).$ To this end, we consider the function $u$ defined on $[0,\infty)$ by $u(t)=\left\Vert x(t)-q^{\ast
}\right\Vert ^{2}.$ Using the fact that $x(.)$ is a solution of (\ref{cau})
and the estimation (\ref{est}), we easily obtain the following estimations
\begin{align}
u(t) &  =2\langle x^{\prime}(t),x(t)-q^{\ast}\rangle \nonumber\\
&  =2\langle-x(t)+g(t,x(t)),x(t)-q^{\ast}\rangle \nonumber\\
&  =-2u(t)+2\langle g(t,q^{\ast})-q^{\ast},x(t)-q^{\ast}\rangle+2\langle
g(t,x(t))-g(t,q^{\ast}),x(t)-q^{\ast}\rangle \nonumber\\
&  =-2u(t)+2\theta(t)\langle f(q^{\ast})-q^{\ast},x(t)-q^{\ast}\rangle
+2\langle g(t,x(t))-g(t,q^{\ast}),x(t)-q^{\ast}\rangle \nonumber\\
&  \leq-2u(t)+2\theta(t)\langle f(q^{\ast})-q^{\ast},x(t)-q^{\ast}%
\rangle+2(1-\gamma\theta(t))\left\Vert x(t)-q^{\ast}\right\Vert ^{2}\label{ess}\\
&  \leq-2\gamma\theta(t)u(t)+2\theta(t)\left\Vert f(q^{\ast})-q^{\ast
}\right\Vert \left\Vert x(t)-q^{\ast}\right\Vert \nonumber \\
&  =-2\gamma\theta(t)u(t)+2\theta(t)\left\Vert f(q^{\ast})-q^{\ast}\right\Vert
\sqrt{u(t)}. \nonumber
\end{align}
Hence, by applying Lemma \ref{gro}, we get%
\[
\sqrt{u(t)}\leq e^{-\gamma\Theta(t)}\sqrt{u(0)}+\frac{\left\Vert f(q^{\ast
})-q^{\ast}\right\Vert }{\gamma}(1-e^{-\gamma\Theta(t)}),
\]
where%
\begin{equation}\label{TH}
\Theta(t)=\int_{0}^{t}\theta(s)ds.
\end{equation}
We thus conclude that
\[
\sup_{t\geq0}\left\Vert x(t)-q^{\ast}\right\Vert \leq\max\left(  \left\Vert
x_{0}-q^{\ast}\right\Vert ,\frac{\left\Vert f(q^{\ast})-q^{\ast}\right\Vert
}{\gamma}\right)  .
\]
\par\noindent \textbf{The third step:} We prove here that $x^{\prime}(t)$ converges strongly in $\mathcal{H}$ to $0$ as
$t\rightarrow\infty.$ The main idea of the proof is inspired by the proof of \cite[Lemma 8]{CPS}.
Let $\delta>0.$ We define the function $\omega$ on $[0,\infty)$ by
$\omega(t)=\left\Vert x(t+\delta)-x(t)\right\Vert ^{2}.$ Clearly,
\begin{align}
\omega^{\prime}(t)  &  =2\langle x^{\prime}(t+\delta)-x^{\prime}(t),x(t+\delta)-x(t)\rangle
\nonumber\\
&  =-2\omega(t)+2\langle g(t+\delta,x(t+\delta))-g(t,x(t)),x(t+\delta
)-x(t)\rangle\nonumber\\
&  =-2\omega(t)+2\langle g(t,x(t+\delta))-g(t,x(t)),x(t+\delta)-x(t)\rangle
\nonumber\\
&  +2\langle g(t+\delta,x(t+\delta))-g(t,x(t+\delta)),x(t+\delta
)-x(t)\rangle\nonumber\\
&  \leq-2\gamma\theta(t)\omega(t)+2\left\Vert g(t+\delta,x(t+\delta
))-g(t,x(t+\delta))\right\Vert \sqrt{\omega(t)}\nonumber\\
&  \leq-2\gamma\theta(t)\omega(t)+2M\left\vert \theta(t+\delta)-\theta
(t)\right\vert \sqrt{\omega(t)}, \label{der}%
\end{align}
where
\begin{equation}
M:=\sup_{s\geq0}\left(  \left\Vert f(x(s))\right\Vert +\left\Vert
T(x(s)\right\Vert \right)  . \label{bou}%
\end{equation}
We notice here that $M$ is finite since $x(.)\in L^{\infty}([0,\infty),\mathcal{H})$ and $f$ and $T$ are Lipschitiz continuous functions. Applying now
Lemma \ref{gro} to the inequality (\ref{der}), we deduce that for every
$t\geq0,$%
\[
\left\Vert x(t+\delta)-x(t)\right\Vert \leq e^{-\gamma\Theta(t)}\left\Vert
x(\delta)-x(0)\right\Vert +Me^{-\gamma\Theta(t)}\int_{0}^{t}e^{\gamma
\Theta(s)}\left\vert \theta(s+\delta)-\theta(s)\right\vert ds
\]
where $\Theta$ is the function defined by (\ref{TH}).
\par\noindent Dividing the last
inequality by $\delta$ and letting $\delta\rightarrow0,$ we obtain%

\begin{equation}
\left\Vert x^{\prime}(t)\right\Vert \leq e^{-\gamma\Theta(t)}\left\Vert
x^{\prime}(0)\right\Vert +Me^{-\gamma\Theta(t)}\int_{0}^{t}e^{\gamma\Theta
(s)}\left\vert \theta^{\prime}(s)\right\vert ds \forall t\geq 0.\label{rat1}%
\end{equation}
From the condition (C'$_{2}$), $\Theta(t)\rightarrow\infty$ as $t\rightarrow
\infty,$ hence in order to prove that $\left\Vert x^{\prime}(t)\right\Vert
\rightarrow0$ as $t\rightarrow\infty$ it suffices to prove that
$r(t):=e^{-\gamma\Theta(t)}\int_{0}^{t}e^{\gamma\Theta(s)}\left\vert
\theta^{\prime}(s)\right\vert ds\rightarrow0$ as $t\rightarrow\infty.$ Here we make use of the condition (C'$_{5}$). We therefore consider the two following cases:

\par\noindent \textbf{The case when $\int_{0}^{\infty}\left\vert \theta^{\prime}(s)\right\vert
ds<\infty$.}
\par\noindent Let $A>0.$ Since the function $\Theta$  is increasing then for
every $t\geq A$
\[
r(t)\leq e^{-\gamma\Theta(t)}\int_{0}^{A}e^{\gamma\Theta(s)}\left\vert
\theta^{\prime}(s)\right\vert ds+\int_{A}^{t}\left\vert \theta^{\prime
}(s)\right\vert ds.
\]
This inequality clearly implies%
\[
\overline{\lim_{t\rightarrow\infty}}~r(t)\leq\int_{A}^{\infty}\left\vert
\theta^{\prime}(s)\right\vert ds.
\]
Therefore, by letting $A\rightarrow\infty,$ we get the desired result
$\lim_{t\rightarrow\infty}r(t)=0.$

\noindent \textbf{The case when $\lim_{t\rightarrow\infty}\frac{\left\vert \theta^{\prime
}(t)\right\vert }{\theta(t)}=0$.}
\par\noindent Let $A>0.$ for every $t\geq A,$%
\begin{align}
r(t)  &  \leq e^{-\gamma\Theta(t)}\int_{0}^{A}e^{\gamma\Theta(s)}\left\vert
\theta^{\prime}(s)\right\vert ds+e^{-\gamma\Theta(t)}\int_{A}^{t}%
e^{\gamma\Theta(s)}\theta(s)ds~\sup_{s\geq A}\frac{\left\vert \theta^{\prime
}(s)\right\vert }{\theta(s)}\nonumber\\
&  \leq e^{-\gamma\Theta(t)}\int_{0}^{A}e^{\gamma\Theta(s)}\left\vert
\theta^{\prime}(s)\right\vert ds+\frac{1}{\gamma}(1-e^{-\gamma(\Theta
(t)-\Theta(A)})\sup_{s\geq A}\frac{\left\vert \theta^{\prime}(s)\right\vert
}{\theta(s)}\nonumber\\
&  \leq e^{-\gamma\Theta(t)}\int_{0}^{A}e^{\gamma\Theta(s)}\left\vert
\theta^{\prime}(s)\right\vert ds+\frac{1}{\gamma}\sup_{s\geq A}\frac
{\left\vert \theta^{\prime}(s)\right\vert }{\theta(s)}. \label{esti}%
\end{align}
Hence by letting $t\rightarrow\infty$ we get
$$\lim_{t\rightarrow\infty}r(t)\leq \frac{1}{\gamma}\sup_{s\geq A}\frac
{\left\vert \theta^{\prime}(s)\right\vert }{\theta(s)}.$$
Thus we  can conclude by letting $A$ go to $\infty$.

\par\noindent \textbf{The fourth step}: We will show that
$$
r^{\ast}:=\overline{\lim_{t\rightarrow\infty}}\langle f(q^{\ast})-q^{\ast
},x(t)-q^{\ast}\rangle\leq0.
\label{a1}$$
Since $x(.)\in L^{\infty}([0,\infty),\mathcal{H}),$ there exist
$x_{\infty}\in \mathcal{H}$ and a sequence of positive real numbers $\{t_{n}\}$ which tend
to $\infty$ such that $(\{x(t_{n})\}$ converges weakly in $\mathcal{H}$ to $x_{\infty}$
and
\begin{align}
r^{\ast}& =\lim_{n\rightarrow\infty}\langle f(q^{\ast})-q^{\ast},x(t_n)-q^{\ast}\rangle \\
&=\langle f(q^{\ast})-q^{\ast},x(\infty)-q^{\ast}\rangle.
\end{align}
On the other hand, since $x(.)$ is a trajectory of (CDS),
\begin{align}
\left\Vert x(t)-T(x(t))\right\Vert  &  \leq\theta(t)\left(
\left\Vert f(x(t))\right\Vert +\left\Vert T(x(t))\right\Vert \right)
+\left\Vert x^{\prime}(t)\right\Vert \nonumber\\
&  \leq2M~\theta(t)+\left\Vert x^{\prime}(t)\right\Vert ,\label{rat2}%
\end{align}
for every
$t\geq0$, where the constant $M$ is given by (\ref{bou}). Hence the condition (C'$_{1})$ combined with the fact that $\left\Vert x^{\prime}(t)\right\Vert\rightarrow 0$ as $t\rightarrow\infty$  implies that the sequence $\{x(t_{n}%
)-T(x(t_{n})\}$ converges strongly in $\mathcal{H}$ to $0.$ Therefore, by invoking
Lemma \ref{clo}, we deduce that $x_{\infty}\in Fix(T)$ which in turn implies that
\[
r^{\ast}=\langle f(q^{\ast})-q^{\ast},x_{\infty}-q^{\ast}\rangle\leq0.
\]
\par\noindent \textbf{The fifth step}: Finally, we establish that $x(t)\rightarrow q^{\ast}$ strongly in $\mathcal{H}$
as $t\rightarrow\infty.$
\par\noindent Define $u(t)=\left\Vert x(t)-q^{\ast}\right\Vert ^{2}.$
From (\ref{ess}), we have for every $t\geq0,$%
\[
u^{\prime}(t)+2\gamma\theta(t)u(t)\leq2\theta(t)w(t),
\]
where $$w(t):=\max\{\langle f(q^{\ast})-q^{\ast},x(t)-q^{\ast}\rangle,0\}.$$
By integrating the last differential inequality, we get%
\[
u(t)\leq e^{-2\gamma\Theta(t)}u(0)+2e^{-2\gamma\Theta(t)}\int_{0}^{t}%
e^{2\gamma\Theta(s)}\theta(s)w(s)ds,t\geq0.
\]
From the previous step,  $w(t)\rightarrow0$ as
$t\rightarrow\infty.$ Hence, by following the procedure yielding to the
estimation (\ref{esti}), we easily get $e^{-2\gamma\Theta(t)}\int_{0}^{t}%
e^{2\gamma\Theta(s)}\theta(s)w(s)ds$ $\rightarrow0$ as $t\rightarrow\infty.$ We
therefore conclude that $u(t)\rightarrow0$ as $t\rightarrow
\infty.$ This ends the proof.
\end{proof}
\section{Stability and rate of convergence of the trajectory of the dynamical system (CDS)}
In the first part of this section, we prove that the dynamical system (CDS) is stable under
the effect of a relatively small perturbation. Precisely, we prove the
following result.

\begin{theorem}\label{the2}
Let $h:[0,\infty)\rightarrow \mathcal{H}$ be a continuous function such that $h\in
L^{1}([0,\infty),\mathcal{H})$ or $\frac{\left\Vert h(t)\right\Vert }{\theta
(t)}\rightarrow0$ as $t\rightarrow\infty.$ Then for every initial data
$x_{0}\in C,$ the perturbed dynamical system%
\begin{equation}
\left\{
\begin{array}
[c]{l}%
x^{\prime}(t)+x(t)=P_{C}(\theta(t)f(x(t))+(1-\theta(t))T(x(t))+h(t)),t\geq0\\
x(0)=x_{0}%
\end{array}
\right.  \tag{PCDS}%
\end{equation}
has a unique solution $y\in C^{1}([0,\infty),H)$ such that $y(t)\in C$
for every $t\geq0.$ Moreover, $y(t)$ converges strongly in $\mathcal{H}$ as
$t\rightarrow\infty$ to $q^{\ast}$ the unique solution to the variational
problem (VP).
\end{theorem}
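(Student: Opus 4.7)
The plan is to avoid redoing the full analysis of Theorem \ref{the1} by comparing the trajectory $y(\cdot)$ of (PCDS) directly with the trajectory $x(\cdot)$ of the unperturbed system (CDS) starting from the same data $x_0$, and then invoking Theorem \ref{the1}, which already gives $\|x(t)-q^\ast\|\to 0$. The clean point is that since $g(t,z):=\theta(t)f(z)+(1-\theta(t))T(z)$ maps $C$ into $C$ (convexity of $C$), we have $g(t,x(t)) = P_C(g(t,x(t)))$, which lets us bring in the nonexpansiveness of $P_C$ without loss.

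For existence and uniqueness of $y(\cdot)$, I would mimic the first step of Theorem \ref{the1}: extend the vector field to $\mathcal{H}$ by setting
\[
F(t,z) \;=\; -z + P_C\bigl(\theta(t)f(P_C(z)) + (1-\theta(t))T(P_C(z)) + h(t)\bigr).
\]
Since $f$, $T$ and $P_C$ are all nonexpansive on their respective domains, $F$ is uniformly Lipschitz in $z$ on $[0,\infty)\times\mathcal{H}$ (with constant $\leq 2$) and continuous in $t$. Cauchy--Lipschitz then yields a unique global $y\in C^1([0,\infty),\mathcal{H})$. That $y(t)\in C$ for every $t$ follows from the integral representation $y(t) = e^{-t}x_0 + e^{-t}\int_0^t e^{s}F(s,y(s))\,ds$, the fact that $F(t,\cdot)$ takes values in $C$, Lemma \ref{int}, and the closedness/convexity of $C$, exactly as in Theorem \ref{the1}.

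For the convergence, set $e(t) := \|x(t)-y(t)\|^2$. Differentiating and using $g(t,x(t)) = P_C(g(t,x(t)))$ together with the nonexpansiveness of $P_C$, I would get
\[
\|g(t,x(t)) - P_C(g(t,y(t))+h(t))\| \;\leq\; (1-\gamma\theta(t))\|x(t)-y(t)\| + \|h(t)\|,
\]
which after scalar product with $x(t)-y(t)$ yields the differential inequality
\[
e'(t) \;\leq\; -2\gamma\theta(t)\,e(t) + 2\|h(t)\|\sqrt{e(t)}.
\]
Since $x(0)=y(0)=x_0$, we have $e(0)=0$, and Lemma \ref{gro} gives
\[
\sqrt{e(t)} \;\leq\; e^{-\gamma\Theta(t)} \int_0^{t} e^{\gamma\Theta(s)}\|h(s)\|\,ds .
\]

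Finally, I would show that this right-hand side tends to $0$ by splitting the integral at an arbitrary $A>0$, exactly as in the third step of Theorem \ref{the1}. If $h\in L^1$, the tail $\int_A^{\infty}\|h(s)\|\,ds$ controls the limsup; if $\|h(t)\|/\theta(t)\to 0$, the tail is bounded by $\frac{1}{\gamma}\sup_{s\geq A}\|h(s)\|/\theta(s)$. Letting $A\to\infty$ gives $e(t)\to 0$, whence $\|x(t)-y(t)\|\to 0$, and combining with Theorem \ref{the1} yields $y(t)\to q^\ast$ strongly. The main subtlety I foresee is simply setting up the comparison cleanly so that the $P_C$ absorbs the contraction factor; once that is in place, every other step just recycles tools already proved in Sections~2 and~3.
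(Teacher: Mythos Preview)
Your proposal is correct and follows essentially the same route as the paper: compare $y$ with the unperturbed trajectory $x$ of (CDS), exploit $g(t,x(t))=P_C(g(t,x(t)))$ and the nonexpansiveness of $P_C$ to obtain $e'(t)\le -2\gamma\theta(t)e(t)+2\|h(t)\|\sqrt{e(t)}$, apply Lemma~\ref{gro}, and then recycle the tail-splitting argument from the third step of Theorem~\ref{the1}. One small slip to fix in the write-up: in your integral representation it is $P_C(\theta f(P_C y)+(1-\theta)T(P_C y)+h)$, not $F(s,y(s))=-y(s)+P_C(\ldots)$, that takes values in $C$, so replace $F$ by that projection term before invoking Lemma~\ref{int}.
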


\begin{proof}
By proceeding exactly as in the second step of the proof of Theorem \ref{the1}, we
can establish that (PCDS) has a unique unique solution $y\in C^{1}%
([0,\infty),\mathcal{H})$ which verifies $y(t)\in C$ for every $t\geq0.$ Let now
$x(.)$ be the unique trajectory of (CDS). We consider the function $v$ defined on
$[0,\infty)$  by $v(t)=\left\Vert x(t)-y(t)\right\Vert ^{2}.$ For every
$t\geq0,$%
\begin{align}
v^{\prime}(t) &  =2\langle x^{\prime}(t)-y^{\prime}(t),x(t)-y(t)\rangle
\nonumber\\
&  =-2v(t)+2\langle\xi(t),x(t)-y(t)\rangle\label{e2}%
\end{align}
with
$$\xi(t)=P_{C}(\theta(t)f(y(t))+(1-\theta(t))T(y(t))+h(t))-(\theta
(t)f(x(t))+(1-\theta(t))T(x(t))).$$
Using the facts that $\delta(t):=\theta(t)f(x(t))+(1-\theta(t))T(x(t))\in C$ (which implies that $P_C(\delta(t))=\delta(t)$) and $P_{C}$
is nonexpansive, we easily get%
$$ \left\Vert \xi(t)\right\Vert \leq(1-\gamma\theta(t))\left\Vert
x(t)-y(t)\right\Vert +\left\Vert h(t)\right\Vert .$$
Hence, by combining this last estimate with (\ref{e2}) and using the Cauchy-Schwarz inequality, we obtain%
\[
v^{\prime}(t)\leq2\gamma\theta(t)v(t)+2\left\Vert h(t)\right\Vert \sqrt
{v(t)},~\forall t\geq0.
\]
Therefore, by applying Lemma \ref{gro}, we deduce that, for every $t\geq0,$%
\[
v(t)\leq e^{-\gamma\Theta(t)}v(0)+e^{-\gamma\Theta(t)}\int_{0}^{t}%
e^{\gamma\Theta(s)}\left\Vert h(s)\right\Vert ds,
\]
which implies, as in the proof of Theorem \ref{the1}, that $v(t)\rightarrow0$ as
$t\rightarrow\infty.$ Recalling finally that, from Theorem \ref{the1},
$x(t)\rightarrow q^{\ast}$ strongly in $\mathcal{H}$ as $t\rightarrow\infty$, we
conclude that $y(t)$ converges strongly in $\mathcal{H}$ as well to the same limit $q^{\ast}$
as $t\rightarrow\infty.$
\end{proof}

We will now establish an estimation on the rate of convergence of $x(t)-T(x(t))$ to
$0$ in the particular case when $\theta(t)=\frac{K}{(1+t)^{\nu}}$ where $K>0$
and $0<\nu\leq1.$

\begin{theorem}\label{the3}
Assume that $\theta(t)=\frac{K}{(1+t)^{\nu}}$ with $0<\nu<1$ and $K>0$ or
$\nu=1$ and $K>\frac{1}{1-\alpha}.$ Let $x(.)$ be the trajectory of the
dynamical system (CDS). Then there exists a constant $C_{\nu}>0$ such that for
every $t\geq0$%
\begin{equation}
\left\Vert x(t)-T(x(t))\right\Vert \leq\frac{C_{\nu}}{(1+t)^{\nu}}.\label{rat}%
\end{equation}

\end{theorem}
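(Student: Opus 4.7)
The plan is to build directly on two estimates derived in the proof of Theorem \ref{the1}: the pointwise bound $\|x(t)-T(x(t))\| \leq 2M\theta(t) + \|x'(t)\|$ from (\ref{rat2}), and the Gronwall-type estimate
\[
\|x'(t)\| \leq e^{-\gamma\Theta(t)}\|x'(0)\| + M\,r(t), \qquad r(t) := e^{-\gamma\Theta(t)}\int_0^t e^{\gamma\Theta(s)}|\theta'(s)|\,ds,
\]
from (\ref{rat1}). For $\theta(t) = K/(1+t)^\nu$, the term $2M\theta(t)$ already has the desired rate $(1+t)^{-\nu}$, so the whole problem reduces to showing that $\|x'(t)\| = O((1+t)^{-\nu})$.

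I would first dispose of the homogeneous term $e^{-\gamma\Theta(t)}\|x'(0)\|$. For $\nu<1$, $\Theta(t)$ behaves like $K(1+t)^{1-\nu}/(1-\nu)$, so $e^{-\gamma\Theta(t)}$ decays faster than any polynomial and is trivially $O((1+t)^{-\nu})$. For $\nu=1$, $e^{-\gamma\Theta(t)} = (1+t)^{-\gamma K}$, and the standing assumption $K > 1/(1-\alpha) = 1/\gamma$ yields $\gamma K > 1 = \nu$, giving the required decay.

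The heart of the argument is the estimate on $r(t)$. For $\nu=1$ this is a direct computation: with $|\theta'(s)| = K/(1+s)^2$ and $e^{\gamma\Theta(s)} = (1+s)^{\gamma K}$, the integral is elementary and, because $\gamma K > 1$, yields $r(t) \leq K/[(\gamma K - 1)(1+t)]$. For $0 < \nu < 1$ there is no closed-form primitive, and I would split the integral at $s = t/2$. On $[0,t/2]$ the crude bound $\int_0^{t/2}|\theta'(s)|\,ds \leq \theta(0) = K$ combined with the prefactor $e^{-\gamma(\Theta(t)-\Theta(t/2))}$ produces a contribution that is superpolynomially small, since $\Theta(t)-\Theta(t/2)$ grows like $(1+t)^{1-\nu}$. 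On $[t/2,t]$ I would exploit the identity $|\theta'(s)| = \frac{\nu}{1+s}\theta(s) \leq \frac{2\nu}{2+t}\theta(s)$; using $e^{\gamma\Theta(s)}\theta(s) = \gamma^{-1}(e^{\gamma\Theta(s)})'$ this reduces the integral to $\frac{2\nu}{\gamma(2+t)}(e^{\gamma\Theta(t)}-e^{\gamma\Theta(t/2)})$, whose product with $e^{-\gamma\Theta(t)}$ is $O((1+t)^{-1})$.

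Putting everything together yields $\|x'(t)\| = O((1+t)^{-1})$, which is certainly $O((1+t)^{-\nu})$ since $\nu \leq 1$, and hence $\|x(t)-T(x(t))\| \leq C_\nu/(1+t)^\nu$. The main technical obstacle is the estimate on $r(t)$ for $\nu < 1$: one must control the non-polynomial growth of $e^{\gamma\Theta(s)}$ without sacrificing the $(1+t)^{-1}$ rate, which is precisely what the splitting $[0,t/2]\cup[t/2,t]$ accomplishes. The case $\nu = 1$ is borderline, and it is there that the hypothesis $K > 1/(1-\alpha)$ is essential; without it, the primitive $\int_0^t (1+s)^{\gamma K - 2}\,ds$ would fail to deliver the required $(1+t)^{-1}$ decay.
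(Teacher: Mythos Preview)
Your proposal is correct and follows the same overall architecture as the paper: reduce to the two estimates (\ref{rat1}) and (\ref{rat2}), observe that $\theta(t)$ and the homogeneous term $e^{-\gamma\Theta(t)}\|x'(0)\|$ already decay at the required rate, and then show $r(t)=O((1+t)^{-1})$. The case $\nu=1$ is handled identically by direct integration, using $\gamma K>1$.

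The only genuine difference is in the treatment of $r(t)$ for $0<\nu<1$. The paper does not split the interval; instead it applies L'Hospital's rule to identify the exact asymptotic
\[
\int_0^t \frac{e^{\kappa(1+s)^{1-\nu}}}{(1+s)^{1+\nu}}\,ds \;\sim\; \frac{1}{\kappa(1-\nu)}\,\frac{e^{\kappa(1+t)^{1-\nu}}}{1+t},
\]
from which $r(t)=O((1+t)^{-1})$ follows immediately. Your splitting $[0,t/2]\cup[t/2,t]$ is more hands-on but entirely elementary: it avoids L'Hospital and makes the mechanism transparent---the far tail $[t/2,t]$ is controlled by freezing $\nu/(1+s)\le 2\nu/(2+t)$ and integrating the exact derivative $(e^{\gamma\Theta})'$, while the near part is killed by the gap $\Theta(t)-\Theta(t/2)\gtrsim (1+t)^{1-\nu}$. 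Both routes yield the same $O((1+t)^{-1})$ bound on $r(t)$; the paper's is shorter, yours is more self-contained and would adapt more readily if $\theta$ were only assumed to satisfy $|\theta'|/\theta = O((1+t)^{-1})$ rather than being explicitly power-type.
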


\begin{proof}
In the proof of Theorem \ref{the1} (see (\ref{rat1}) and (\ref{rat2})), we have
established that there exists a constant $M>0$ such that for every $t\geq0,$%
\begin{equation}
\left\Vert x(t)-T(x(t))\right\Vert \leq M~\theta(t)+\left\Vert x^{\prime
}(t)\right\Vert \label{eq1}%
\end{equation}
and
\begin{equation}
\left\Vert x^{\prime}(t)\right\Vert \leq e^{-\gamma\Theta(t)}\left\Vert
x^{\prime}(0)\right\Vert +Me^{-\gamma\Theta(t)}\int_{0}^{t}e^{\gamma\Theta
(s)}\left\vert \theta^{\prime}(s)\right\vert ds,\label{eq2}%
\end{equation}
where
\[
\gamma=1-\alpha,
\]
and
\[
\Theta(t)=\int_{0}^{t}\theta(s)ds.
\]
Let us now estimate the vanishing rate of the key term $$r(t):=e^{-\gamma
\Theta(t)}\int_{0}^{t}e^{\gamma\Theta(s)}\left\vert \theta^{\prime
}(s)\right\vert ds.$$
To do this, we distinguish the two cases.

\par\noindent \textbf{The first case: $\theta(t)=\frac{K}{1+t}$ with $K>\frac{1}{\gamma}.$}
\par\noindent For every
$t\geq0,$
\begin{equation}
e^{-\gamma\Theta(t)}=\frac{1}{(1+t)^{K\gamma}},\label{eq3}%
\end{equation}%
\begin{equation}
r(t)=\frac{K}{(1+t)^{K\gamma}}\int_{0}^{t}\frac{ds}{(1+s)^{2-K\gamma}}%
\leq\frac{K}{K\gamma-1}\frac{1}{1+t}\label{eq4}%
\end{equation}
\par\noindent \textbf{The second case: $\theta(t)=\frac{K}{(1+t)^{\nu}}$ with $0<\nu<1$ and $K>0.$}
\par\noindent In this case, for every $t\leq 0,$%
\begin{equation}
e^{-\gamma\Theta(t)}=e^{-\kappa(1+t)^{1-\nu}},\label{eq5}%
\end{equation}%
\[
r(t)=K\nu e^{-\kappa(1+t)^{1-\nu}}\int_{0}^{t}\frac{e^{\kappa (1+s)^{1-\nu}%
}ds}{(1+s)^{1+\nu}},
\]
with $\kappa=\frac{\gamma K}{\nu}.$ Now a simple application of the L'Hospital's rule gives
\begin{equation}
\lim_{t\rightarrow\infty}\frac{\int_{0}^{t}\frac{e^{\kappa(1+s)^{1-\nu}}}{(1+s)^{1+\nu}}ds}{\frac{1}{\kappa(1-\nu)}\frac{e^{\kappa(1+t)^{1-\nu}}}{1+t}}=1.
\end{equation}
This implies that there exits a constant $M'>0$ independent of $t$ such that%
\begin{equation}
r(t)\leq\frac{M'}{1+t},~\forall t\geq0.\label{eq6}%
\end{equation}
Finally, by combining the estimations (\ref{eq1})-(\ref{eq6}) we obtain the required result (\ref{rat}).
\end{proof}

\end{document}